\numberwithin{equation}{section}
\theoremstyle{plain}
\newtheorem{thm}{Theorem}[section]
\newtheorem{prop}[thm]{Proposition}
\newcommand\propref{Proposition~\ref}
\theoremstyle{definition}
\newtheorem{defin}[thm]{Definition}
\newtheorem{rems}[thm]{Remarks}
\def\Ker{\protect\operatorname{Ker}}
\def\Im{\protect\operatorname{Im}}
\def\id{\protect\operatorname{id}}
\def\Sv{\protect{\mathfrak{genus}}}
\def\cat{\protect\operatorname{cat}}
\def\cupr{\smallsmile}
\def\TC{\protect\operatorname{TC}}
\def\eps{\varepsilon}
\def\gf{\varphi}
\def\ga{\alpha}
\def\gs{\sigma}
\def\Z{{\mathbb Z}}
\def\R{{\mathbb R}}
\def\N{{\mathbb N}}
\def\F{{\mathbb F}}
\def\1{\hbox{\rm\rlap {1}\hskip.03in{\rom I}}}
\def\Bbbone{{\rm1\mathchoice{\kern-0.25em}{\kern-0.25em}
{\kern-0.2em}{\kern-0.2em}I}}
\def\p{\medskip{\parindent 0pt \it Proof.\ }}
\def\wh{\widehat}
\def\m{\medskip}
\def\sm{\smallskip}
\long\def\forget#1\forgotten{} %
\begin{document}

\title{On higher analogs of topological complexity}
\author[Yu.~Rudyak]{Yuli B. Rudyak}

\address{Yuli B. Rudyak, Department of Mathematics, University
of Florida, 358 Little Hall, Gainesville, FL 32611-8105, USA}
\email{rudyak@ufl.edu}

%\subjclass{Primary 55M30, Secondary 53C23, 57N65}

\begin{abstract}
Farber introduced a notion of topological complexity $\TC(X)$ that is related to robotics. Here we introduce a series of numerical invariants $\TC_n(X), n=2,3,  \ldots$ such that $\TC_2(X)=\TC(X)$ and $\TC_n(X)\le \TC_{n+1}(X)$.  For these higher complexities, we define their symmetric versions that can also be regarded as higher analogs of the symmetric topological complexity.
\end{abstract}

\maketitle

\section{Introduction}

In \cite{F1} Farber introduced a notion of topological complexity $\TC(X)$ and  related it to a problem of robot motion planning algorithm. Here we introduce a series of numerical invariants $\TC_n(X), n=2,3  \ldots$ such that $\TC_2(X)=\TC(X)$ and $\TC_n(X)\le \TC_{n+1}(X)$.  We learn some properties of $\TC_n$ and, in particular, compute $\TC_n(S^k)$. We also define symmetric analogs of higher complexities (=higher analogs of symmetric complexity) introduced in~\cite[Section 31]{F2} and developed in~\cite{FG, GL}.

\m Throughout the paper $\cat X$ denotes the Lusternik--Schnirelmann category of a space $X$, i.e. cat $X$ is one less than the minimal of open and contractible sets in $X$ that cover $X$. For example, $X$ is contractible iff $\cat X=0$.

\m I am grateful to Mark Grant, Jes\'us Gonz\'alez and Peter Landweber who have read the previous versions of the paper and made several useful and helpful comments.

\section{The Schwarz genus of a map}

Given a map $f: X \to Y$ with $X,Y$ path connected, a {\em fibrational substitute} of $f$ is defined as a fibration $\wh f: E \to Y$ such that there exists a  commutative diagram
$$
\CD
X @> h>> E\\
@VfVV @VV \wh f V\\
Y @= Y
\endCD
$$
where $h$ is a homotopy equivalence. The well-known result of Serre~\cite{S} tells us that every map has a fibrational substitute, and it can be proved that any two fibrational substitutes of a map are fiber homotopy equivalent fibrations.

\sm Given a map $f: X \to Y$, we say that a subset $A$ of $Y$ is a local $f$-section if there exists a map $s: A \to X$ (a local section) such that $fs=\id$.

\m The Schwarz genus of a fibration $p: E \to B$ is defined as a minimum number $k$ such that there exists an open covering $U_1, \ldots, U_k$ of $B$ where each map $U_i$ has a local $p$-section, \cite{Sv}. We define the Schwarz genus of a {\it map} $f$ as the Schwarz genus of its fibrational substitute, and we denote it by $\Sv(f)$. This notion is well-defined since any two fibrational substitutes of a map are fiber homotopy equivalent.

\begin{prop}\label{p:comp} For any diagram $X\stackrel{f}\to Y\stackrel{g}\to Z$ we have $\Sv(gf)\ge \Sv(g)$.
\end{prop}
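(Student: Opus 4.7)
My plan is to produce a specific fibrational substitute of $gf$ that factors through a fibrational substitute of $g$; local sections over $Z$ for the former will then project to local sections for the latter via the intermediate fibration.

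First I would fix a fibrational substitute $\wh g\colon E\to Z$ of $g$, together with a homotopy equivalence $h\colon Y\to E$ satisfying $\wh g\circ h=g$. Next I would apply Serre's theorem a second time to the composite $hf\colon X\to E$ to obtain a fibrational substitute $q\colon E'\to E$ of $hf$, with homotopy equivalence $h'\colon X\to E'$ and $q\circ h'=hf$. Setting $p:=\wh g\circ q\colon E'\to Z$, I would verify that $p$ is a fibrational substitute of $gf$: it is a composition of two fibrations and hence itself a fibration, $p\circ h'=\wh g\circ h\circ f=gf$, and $h'$ is a homotopy equivalence. Since $\Sv(gf)$ does not depend on the chosen fibrational substitute, $\Sv(gf)=\Sv(p)$.

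With this setup the desired bound is immediate. Given an open covering $\{U_1,\dots,U_k\}$ of $Z$ realizing $k=\Sv(p)$ together with local $p$-sections $s_i\colon U_i\to E'$, I would define $\sigma_i:=q\circ s_i\colon U_i\to E$ and observe that $\wh g\circ \sigma_i=p\circ s_i=\id_{U_i}$, so each $\sigma_i$ is a local $\wh g$-section. Hence the same covering witnesses $\Sv(\wh g)\le k=\Sv(gf)$, yielding $\Sv(g)\le\Sv(gf)$. The only step requiring genuine care is the recognition of $p=\wh g\circ q$ as a fibrational substitute of $gf$; this rests on the standard fact that composites of fibrations are fibrations, together with the well-definedness of Schwarz genus under fiber homotopy equivalence. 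Once this two-stage substitute is in hand, the local sections descend with no further work.
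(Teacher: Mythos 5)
Your proposal is correct and follows essentially the same route as the paper's (very terse) proof: reduce to the fibration case via fibrational substitutes and then push local sections of the composite down to local sections of $\wh g$. You have simply carried out the paper's instruction ``replace $f$ and $g$ by fibrational substitutes'' in careful detail, including the useful observation that $\wh g\circ q$ is itself a fibrational substitute of $gf$.
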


\p This is clear if both $f$ and $g$ (and therefore $gf$) are fibrations. In the general case, replace $f$ and $g$ by fibrational substitutes.
\qed

\m The following remark is useful for applications.

\begin{prop}\label{p:enr} Let $p: E \to B$ be a fibration over a polyhedron $B$. Suppose that $B=X_1\cup \cdots \cup X_n$ where each $X_i$ is an ENR and has a local $p$-section. Then $\Sv(f) \le n$.
\end{prop}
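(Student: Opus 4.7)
The plan is to enlarge each ENR $X_i$ to an open neighborhood $V_i\subset B$ over which $p$ still admits a local section, so that $\{V_1,\ldots,V_n\}$ becomes an open cover of $B$ by sets with local $p$-sections; then $\Sv(p)\le n$ follows directly from the definition of Schwarz genus.

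The key input is that $B$, being a polyhedron, is an ANR, and that each $X_i$, being an ENR, is also an ANR. By the classical theorem that a closed ANR sitting inside an ANR is a neighborhood deformation retract, I obtain for each $i$ an open neighborhood $V_i\subset B$ of $X_i$, a retraction $r_i\colon V_i\to X_i$, and a homotopy $H_i\colon V_i\times I\to V_i$ with $H_i(\,\cdot\,,0)=r_i$, $H_i(\,\cdot\,,1)=\id_{V_i}$, and $H_i(x,t)=x$ for $x\in X_i$. If some $X_i$ fails to be closed in $B$, one first passes to its closure or shrinks $V_i$ appropriately; the ENR hypothesis on $X_i$ together with $B$ polyhedral makes this essentially routine.

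Given the local section $s_i\colon X_i\to E$ provided by hypothesis, set $\tilde s_i := s_i\circ r_i\colon V_i\to E$; then $p\circ \tilde s_i = r_i = H_i(\,\cdot\,,0)$. Applying the homotopy lifting property of the fibration $p$ to the homotopy $H_i\colon V_i\times I\to B$ with initial lift $\tilde s_i$, I obtain $\tilde H_i\colon V_i\times I\to E$ with $p\circ \tilde H_i = H_i$ and $\tilde H_i(\,\cdot\,,0)=\tilde s_i$. The endpoint $\sigma_i := \tilde H_i(\,\cdot\,,1)\colon V_i\to E$ then satisfies $p\circ \sigma_i = H_i(\,\cdot\,,1) = \id_{V_i}$, so $\sigma_i$ is a genuine local $p$-section on the open set $V_i$. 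Since each $V_i\supset X_i$ and the $X_i$ cover $B$, the $V_i$ form an open cover of $B$, yielding $\Sv(p)\le n$.

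The main point, and the only non-formal step, is the first one: securing a neighborhood that \emph{deformation} retracts onto $X_i$ rather than merely retracts. With a plain retraction one would only get $p\tilde s_i = r_i$, which is not a section; it is the homotopy $H_i$ that, via the homotopy lifting property, converts the composite $s_i\circ r_i$ into an actual section on $V_i$. This is precisely where the combined polyhedron-plus-ENR hypothesis does its work, and once it is in hand the rest of the proof is a formal manipulation of the covering homotopy.
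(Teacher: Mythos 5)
Your mechanism is the same as the paper's: extend the given section over a neighborhood of $X_i$ by composing with a retraction, then use the homotopy lifting property of $p$, applied to a homotopy between the retraction and the inclusion, to convert that composite into an honest section over an open set; the resulting open sets cover $B$, giving $\Sv(p)\le n$. (The paper runs the homotopy in the opposite direction, lifting from time $1$ back to time $0$, which is an immaterial difference.)

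The weak point is how you produce the neighborhood and the homotopy. You invoke the theorem that a \emph{closed} ANR inside an ANR is a neighborhood deformation retract, but the $X_i$ in the statement are not assumed closed, and in the paper's application (the pieces $U_j\subset (S^k)^n$ in the computation of $\TC_n(S^k)$) they genuinely are not closed. Your suggested fixes do not work as stated: passing to the closure is no help, because the local section is given only on $X_i$ and the closure of an ENR need not be an ENR, while merely ``shrinking $V_i$'' does not make $X_i$ closed in $B$. Two honest repairs: (a) use the weaker ENR lemma the paper cites (\cite{D}, Chapter 4, 8.6--8.7), which with no closedness hypothesis yields an open $U\supset X_i$ and a homotopy between the inclusion of $U$ into $B$ and the map $u\mapsto r(u)\in X_i$; note that your argument never actually uses the rel-$X_i$ condition or that the homotopy stays inside $V_i$, only that $r_i$ is homotopic to the inclusion as maps to $B$, so the full deformation-retract strength is unnecessary; or (b) observe that an ENR is locally compact, hence locally closed in $B$, so $X_i$ is closed in some open set $W\subset B$, which is again an ANR, and apply your closed-ANR theorem inside $W$ to get an open $V_i\subset W\subset B$. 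With either repair, the rest of your proof (the lifting step and the conclusion from the definition of the Schwarz genus) is correct and matches the paper.
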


\begin{proof}
We enlarge each $X_i$ to an open subset of $B$ over
which there is a section of $p$.
Take an ENR $X_i=X$ an embedding $X\subset B\subset \R^N$. Let $r: V \to X$ be a neighborhood retraction. Then there exists an open set $U$ of $V$ with $X\subset U\subset V$ such that the maps $U\subset V$ and $U\subset V \stackrel r\to  X \subset V$ are homotopic,~\cite[Chapter 4, especially 8.6, 8.7]{D}. So, there is a homotopy $H: U \times I \to V$, $H(u,0)=u, H(u,1)\subset X$. Consider a section $s: X \to E$ and put $g: U\to E, g(u)=sH(u,1)$. Now use the homotopy extension property to construct a homotopy $G:U\times I \to E$ with $pG=H$ and $G(u,1)=g(u)$. Then $\gs: U \to E$, $\gs(u)=G(u,0)$ is a section over $U$.
\end{proof}

\section{Higher topological complexity}

\begin{defin}\label{d:tc}
Let $J_n, n\in \N$ denote the wedge of $n$ closed intervals $[0,1]_i, i=1, \ldots n$ where the zero points $0_i\in[0,1]_i$ are identified. Consider a path connected space $X$ and set $T_n(X):=X^{J_n}$. There is an obvious map (fibration) $e_n: T_n(X)\to X^n, e_n(f)=(f(1_1), \ldots, f(1_n))$ where $1_i$ is the unit in $[0,1]_i$, and we define  $\TC_n(X)$ to be the Schwarz genus of $e_{n}$.
\end{defin}

\begin{rems}\label{r:big} 1. The above definition makes also sense for $\TC_1(X)$ (to be always equal to 1), but the notation that are started from $\TC_n, n>1$ turns out to be more elegant.

2. It is easy to see that $\TC_n(X)\ge \TC_n(Y)$ if $X$ dominates $Y$. So, $\TC_n$ is a homotopy invariant.

3. It is also worth noting that the fibration $e_n$  can be described as follows: Take the diagonal map $d_n: X \to X^n$ and regard $e_n$ as its fibrational substitute \`a la Serre. Hence, in fact, the higher topological complexity $\TC_n(X)$ is the Schwarz genus of the diagonal map $d_n: X \to X^n$. Note also that the (homotopy) fiber of $e_n$ is $(\Omega X)^{n-1}$ where $\Omega X$ denotes the loop space of $X$.

4. The fibration $e_n$ is homotopy equivalent to the following fibration $e'_n$. Define $S_n(X)\subset X^I \times X^n$ as
\[
S_n(X)=\{(\ga, x_1, \ldots, x_n)\bigm | x_i\in \Im(\ga: I \to X, i=1, \ldots, n)\}
\]
and define $e'_n: S_n(X)\to X^n$ as $e'_n(\ga, x_1, \ldots, x_n)=(x_1, \ldots, x_n)$. To prove that $e'_n$ is a fibrational substitute of $d_n$,  consider the homotopy equivalence $h : X \to S_n(X), h(x)=(\eps_x, x, \ldots, x)$ where $\eps_x$ is the constant path at $x$. Note that $e'_nh=d_n: X \to X^n$, and thus $e'_n$ is the fibrational substitute of $d_n$.

5. The fibration $e_n$ is homotopy equivalent to the fibration
\[
e''_n: X^I \to X^n,
 e''_n(\ga)=\left(\ga(0), \ga\left(\frac 1 {n-1}\right), \ldots, \ga\left(\frac{k}{n-1}\right), \ldots, \ga(1)\right)
\]
where $\ga: I \to X$. Indeed, consider the homotopy equivalence $h : X \to X^I, h(x)=\eps_x$, and note that $e''_nh=d_n$.

6. It is easy to see (especially in view of the previous item) that $\TC_2(X)$ coincides with the topological complexity $\TC(X)$ introduced by Farber~\cite{F1}.

7. Mark Grant pointed out to me that, similarly to $\TC_2(X)$, the invariant $\TC_n(X)$ is related to robotics. In detail, $\TC_2(X)$ is related to motion planning algorithm when a robot moves from a point to another point, while $\TC_n(X)$ is related to motion planning problem whose input is not only an initial and final point but also an additional $n-2$ intermediate points.
\end{rems}

\m
\begin{prop}\label{p:ineq} $\TC_n(X)\le \TC_{n+1}(X)$.
\end{prop}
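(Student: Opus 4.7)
The plan is to work directly with the Hurewicz fibration $e_n: T_n(X) \to X^n$ from Definition~\ref{d:tc} and convert a local-section cover of $X^{n+1}$ adapted to $e_{n+1}$ into one of $X^n$ adapted to $e_n$. First, $J_n$ sits naturally inside $J_{n+1}$ as the subwedge consisting of the first $n$ intervals, so precomposing with this inclusion defines a continuous ``forgetful'' map $\rho: T_{n+1}(X) \to T_n(X)$. Writing $p_n: X^{n+1} \to X^n$ for the projection onto the first $n$ factors, a direct check on endpoints yields the commutative square
$$
\begin{CD}
T_{n+1}(X) @>\rho>> T_n(X) \\
@Ve_{n+1}VV @VVe_nV \\
X^{n+1} @>p_n>> X^n.
\end{CD}
$$

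Next, I would introduce the continuous embedding $\iota: X^n \to X^{n+1}$ given by $\iota(x_1,\ldots,x_n) = (x_1,\ldots,x_n,x_n)$, which is a section of $p_n$. Given an open cover $U_1,\ldots,U_k$ of $X^{n+1}$ with $k=\TC_{n+1}(X)$ and local sections $s_i: U_i \to T_{n+1}(X)$ of $e_{n+1}$, I would set $V_i := \iota^{-1}(U_i)$. Each $V_i$ is open, and the family $\{V_i\}$ covers $X^n$ because $\iota(v)$ lies in some $U_j$ for every $v \in X^n$. Defining $t_i := \rho \circ s_i \circ \iota|_{V_i}: V_i \to T_n(X)$ and combining $e_n \rho = p_n e_{n+1}$ with $e_{n+1} s_i = \id_{U_i}$ and $p_n \iota = \id_{X^n}$ gives $e_n t_i = \id_{V_i}$, so each $t_i$ is a local section of $e_n$ over $V_i$. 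Hence $\TC_n(X) \le k = \TC_{n+1}(X)$.

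I do not expect any serious obstacle here. The only mild subtlety is that a naive appeal to \propref{p:comp} is insufficient on its own: the factorization $d_{n+1} = \iota \circ d_n$ only yields $\Sv(d_{n+1}) \ge \Sv(\iota)$, while $d_n = p_n \circ d_{n+1}$ only yields $\Sv(d_n) \ge \Sv(p_n) = 1$, neither of which matches what is needed. Working at the level of explicit local sections, where $\iota$ plays the role of a section of $p_n$ enabling honest pullback, is therefore the most direct route.
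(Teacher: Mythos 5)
Your proof is correct, and it does not follow the paper's route. The paper's own argument is a two-line reduction: it factors $d_{n+1}=\gf\, d_n$, where $\gf(x_1,\ldots,x_n)=(x_1,\ldots,x_n,x_n)$ is exactly your $\iota$, and then appeals to \propref{p:comp}. Your closing remark identifies precisely why that short route is delicate: \propref{p:comp} as stated gives $\Sv(\gf\, d_n)\ge \Sv(\gf)$, i.e.\ the genus of the \emph{second} map of the composition, so quoting it verbatim does not produce $\Sv(d_{n+1})\ge \Sv(d_n)$; to get that one needs the extra fact that $\gf$ admits the retraction $p_n$, which upgrades the composition lemma to ``$\Sv(gf)\ge \Sv(f)$ whenever $g$ has a left inverse.'' Your explicit construction supplies exactly this missing content at the level of covers and sections: the restriction map $\rho\colon T_{n+1}(X)\to T_n(X)$ induced by $J_n\subset J_{n+1}$ makes the square with $e_{n+1}$, $e_n$, $p_n$ commute, $\iota$ is a section of $p_n$, and the composites $t_i=\rho\circ s_i\circ \iota|_{V_i}$ over $V_i=\iota^{-1}(U_i)$ are checked directly to be local sections of $e_n$; since $e_n$ is already a fibration (its own fibrational substitute), this legitimately bounds $\TC_n(X)$ by the number of sets in the cover. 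In short, the paper's approach buys brevity once the composition lemma is accepted in the strengthened form it actually needs, while yours buys a self-contained, airtight argument that in effect proves that strengthened lemma in the special case required.
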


\p Let $d_k: X \to X^k$ denote the diagonal, $d_k(x)=(x, \ldots, x)$. Note that $\TC_k(X)$ is the Schwarz genus of the map $d_k$. Define
$$
\gf: X^n \to X^{n+1}, \gf(x_1, \ldots, x_{n-1},x_n)=(x_1, \ldots, x_{n-1}, x_n, x_n).
 $$
 Then $d_{n+1}=\gf d_n$, and hence the Schwarz genus of $d_{n+1}$ is greater than or equal to the Schwarz genus of $d_{n}$ by \propref{p:comp}.
 \qed

\m To compute $\TC_n$, we can apply known methods of calculation of the Schwarz genus. For example, the Schwarz genus of a fibration over $B$ does not exceed $1+\cat B$. So,
\begin{equation}\label{eq:cat}
\TC_n(X)\le 1+\cat (X^n)\le n\cat X+1.
\end{equation}
Furthermore, we have the following claim,~\cite[Theorem 4]{Sv} (here, generally, $H^*(X; A_i)$ denotes cohomology with twisted coefficients).

\begin{prop}\label{p:diag} Let $d_n: X \to X^n$ be the diagonal. If there exist $u_i\in H^*(X^n;A_i), i=1, \ldots, m$ so that $d_n^*u_i=0$ and
$$
u_1\cupr \cdots \cupr u_m\ne 0\in H^*(X^n;A_1\otimes\cdots \otimes A_m),
$$
 then $\TC_n(X)\ge m+1$.
\qed
\end{prop}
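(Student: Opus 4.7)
The plan is to deduce this from the general cup-length lower bound for Schwarz genus, which is the cited result \cite[Theorem 4]{Sv}, combined with \remref{r:big}(3) which identifies $\TC_n(X)$ with the Schwarz genus of the diagonal $d_n\colon X\to X^n$.

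First, I would reinterpret the hypothesis in terms of the fibrational substitute. By \remref{r:big}(3), $\TC_n(X)=\Sv(d_n)$, and this equals the Schwarz genus of any fibrational substitute $e_n\colon T_n(X)\to X^n$, which factors as $e_n\circ h=d_n$ for some homotopy equivalence $h\colon X\to T_n(X)$. Since $h^*$ is an isomorphism, the condition $d_n^*u_i=0$ is equivalent to $e_n^*u_i=0$ for each $i$. So the hypothesis of the proposition is exactly that the classes $u_i$ lie in the kernel of the map induced by the fibration $e_n$ on cohomology.

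Next I would invoke Schwarz's cup-length inequality applied to $e_n$ with local coefficient systems. Suppose for contradiction that $\TC_n(X)\le m$; then there exist open sets $U_1,\ldots,U_m$ covering $X^n$ and sections $s_j\colon U_j\to T_n(X)$ of $e_n$. Since $e_n\circ s_j=\id_{U_j}$, the restriction $u_i|_{U_j}=s_j^*e_n^*u_i$ vanishes; in particular, $u_i|_{U_i}=0$. By the long exact sequence of the pair $(X^n,U_i)$, each $u_i$ lifts to a relative class $\wt u_i\in H^*(X^n,U_i;A_i)$. The relative cup product then yields
\[
\wt u_1\cupr\cdots\cupr\wt u_m \in H^*\bigl(X^n,\,U_1\cup\cdots\cup U_m;\,A_1\otimes\cdots\otimes A_m\bigr),
\]
and this class restricts in $H^*(X^n;A_1\otimes\cdots\otimes A_m)$ to $u_1\cupr\cdots\cupr u_m$. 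But $U_1\cup\cdots\cup U_m=X^n$, so the relative group is zero, forcing $u_1\cupr\cdots\cupr u_m=0$, contradicting the hypothesis. Hence $\TC_n(X)\ge m+1$.

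The only mildly delicate point is checking that the cup product with twisted coefficients behaves compatibly with relative lifts when the local systems $A_i$ are genuinely non-trivial; this is standard and handled by the cross product followed by restriction along the diagonal, so I would not grind through it. The bulk of the argument is simply bookkeeping; the conceptual content is entirely captured by \propref{p:comp} (which already gave the reduction to the diagonal) together with the Schwarz genus cup-length estimate.
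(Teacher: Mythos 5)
Your argument is correct. Note, however, that the paper does not actually prove this proposition: it simply quotes it as a special case of Schwarz's cup-length theorem (\cite[Theorem 4]{Sv}) applied to the fibration $e_n$, which is why the statement carries a \qed with no proof text. What you have written is, in effect, the standard proof of that cited theorem, specialized to $e_n$: the reduction from $d_n^*u_i=0$ to $e_n^*u_i=0$ via the homotopy equivalence $h$ is exactly the content of \remref{r:big}(3), and the rest is the classical relative cup product argument (sections kill the restrictions $u_i|_{U_i}$, lift to relative classes, and the product lands in $H^*(X^n,X^n;A_1\otimes\cdots\otimes A_m)=0$). The one point you flag but do not carry out --- that the relative cup product is available for cohomology with local coefficients, with the coefficient pairing $A_1\otimes\cdots\otimes A_m$ --- is genuinely the only technical issue, and it is handled in Schwarz's paper; your sketch of it (cross product followed by restriction to the diagonal, or equivalently excisive pairs and the usual cochain-level product with coefficient pairing) is adequate. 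So your route is a self-contained proof where the paper relies on a citation; the citation buys brevity, your version buys independence from \cite{Sv}, and both rest on the same mechanism.
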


\begin{prop}\label{p:est} If $X$ is a connected finite CW-space that is not contractible, then $\TC_n(X)\ge n$ .
\end{prop}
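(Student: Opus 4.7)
The plan is to apply Proposition~\ref{p:diag} with $m = n - 1$: I will exhibit $n-1$ zero-divisors for $d_n$ in the cohomology of $X^n$ whose cup product is non-zero.

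First I would extract a non-zero cohomology class of positive degree on $X$ from the non-contractibility hypothesis. If $\pi_1(X) = 1$, then by the Hurewicz theorem $H_q(X; \Z) \ne 0$ for some $q \ge 1$, and universal coefficients yield $H^q(X; k) \ne 0$ for some field $k$; if $\pi_1(X) \ne 1$ the same reasoning applied to the universal cover, or to cohomology with local coefficients on $X$, delivers a non-zero $v \in H^q(X; A)$ with $q \ge 1$. For notational clarity in what follows I take $A = k$ a field with constant coefficients.

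Let $p_j : X^n \to X$ denote projection onto the $j$-th factor, and set
\[
u_i := p_i^* v - p_1^* v \in H^q(X^n; k), \qquad i = 2, \ldots, n.
\]
Since $p_j \circ d_n = \id$, each $u_i$ is a zero-divisor for $d_n$: $d_n^* u_i = v - v = 0$. The key calculation is the non-vanishing of $u_2 \cupr \cdots \cupr u_n$. Distributing and applying the Künneth isomorphism $H^*(X^n; k) \cong H^*(X; k)^{\otimes n}$, each summand takes the form $\pm\, v^{|S^c|} \otimes \varepsilon_2 \otimes \cdots \otimes \varepsilon_n$, indexed by $S \subseteq \{2, \ldots, n\}$, where $S^c = \{2, \ldots, n\} \setminus S$, $\varepsilon_i = v$ for $i \in S$, and $\varepsilon_i = 1$ otherwise. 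The summand for $S = \{2, \ldots, n\}$ is $1 \otimes v \otimes \cdots \otimes v$, which is non-zero by Künneth; it is the unique summand with degree $0$ in the first tensor slot, whereas every other non-zero summand has $v^s$ with $s \ge 1$ there. Since distinct Künneth graded pieces are linearly independent, the distinguished summand cannot be cancelled, the whole product is non-zero, and Proposition~\ref{p:diag} yields $\TC_n(X) \ge n$.

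The main obstacle is the existence step for spaces with especially degenerate cohomology — for instance, integrally acyclic non-simply connected finite CW-complexes with perfect fundamental group — where one must genuinely use local coefficients. There the projections $p_j$ pull a local system on $X$ back to generally distinct local systems on $X^n$, requiring some additional care in constructing the zero-divisors, but the Künneth-based non-vanishing argument itself is robust.
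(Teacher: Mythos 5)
Your simply connected case is exactly the paper's argument: a non-zero $v\in H^q(X;k)$ with $k$ a field, the zero-divisors $p_i^*v-p_1^*v$, and the Künneth/multidegree observation that the summand with trivial entry in the reference slot cannot cancel. That part is correct.

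The gap is the case you yourself flag at the end, and it is not a side issue but precisely the case the paper's proof has to work to handle: a connected finite CW-space that is not contractible but is acyclic over every field (equivalently, $\wt H_*(X;\Z)=0$), which forces $\pi_1(X)\ne 1$ and perfect; finite $2$-dimensional examples exist (e.g.\ the presentation complex of Higman's acyclic group). For such $X$ your existence step produces nothing: there is no non-zero positive-degree class with constant field coefficients, and your two suggested remedies do not work as stated. Passing to the universal cover is a non sequitur --- in the aspherical examples the universal cover is contractible, and in any case a class on $\wt X$ does not descend to a class on $X$ usable in \propref{p:diag}. Saying ``cohomology with local coefficients delivers a non-zero $v$'' is an assertion, not an argument; you must name the class. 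The paper does: it takes the Berstein class $v\in H^1(X;I)$, $I$ the augmentation ideal of $\Z[\pi_1(X)]$, which is non-zero whenever $\pi_1(X)\ne 1$ (this is the content of the cited references \cite{B,DR}), and runs the same zero-divisor argument --- and this is exactly why \propref{p:diag} is stated for twisted coefficients. Note also that in the twisted setting your closing claim that ``the Künneth-based non-vanishing argument itself is robust'' is not free: the pullbacks $p_i^*I$ are pairwise distinct local systems on $X^n$, so the differences $p_i^*v-p_1^*v$ need to be interpreted in a common coefficient system, and the non-vanishing of the $(n-1)$-fold product requires a cross-product/Künneth argument with local coefficients (using that $I$ is $\Z$-free and $X$ is a finite complex), none of which you supply. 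So as written the proof covers only spaces with non-trivial field cohomology and leaves the remaining case --- the one requiring the Berstein class --- open.
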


\p If $X$ is $(k-1)$-connected with $k>1$ then $H^k(X;\F)\ne 0$ for some field $\F$. Take a non-zero $v\in H^k(X;\F)$ and put $v_i=p_i^*v$ where $p_i: X^n \to X$ is the projection onto the $i$th factor. Then $u_i:=v_i-v_{n}\in \Ker d_n^*$ for $i=1,\ldots, n-1$ and $u_1\cupr \cdots \cupr u_{n-1}\ne 0$, and so $\TC_n(X)\ge n$ by \propref{p:diag}.

\m Now, assume that $X$ is not simply connected. Take the Berstein class $v \in H^1(X; I)$ where $I$ is the augmentation ideal in the integral group ring of $\pi_1(X)$, see~\cite{B,DR}. Then argue as in the
previous paragraph.
\qed

\section{An Example: $\TC_n(S^k)$}

Farber~\cite[Theorem 8]{F1} proved that $\TC(S^k)=2$ for $k$ odd and $\TC(S^k)=3$ for $k$ even. We extend this result (and method) and show that $\TC_n(S^k)=n$ for $k$ odd and $\TC_n(S^k)=n+1$ for $k$ even. Fix $n>2$ and $k>0$.

\m  For $k$ even, take a generator $u\in H^k(S^k)=\Z$ and denote by $u_i$ its image in the copy $S^k_i$ of $S^k$, $i=1, \ldots, n$. In the class $H^k((S^k)^n)$, consider the element
$$
v=\left(\sum_{i=1}^{n-1}1\otimes \cdots \otimes 1 \otimes u_i \otimes 1 \otimes \cdots \otimes 1\right)-1\otimes \cdots \otimes 1 \otimes (n-1)u_n.
$$
Then $v^n=(1-n)n!(u_1\otimes \cdots \otimes u_n)$ since $k$ is even, and so $v^n\ne 0$. On the other hand, $d_n^*v=0$. Thus, $\TC_n(S^k)=n+1$ by \eqref{eq:cat} and \propref{p:diag}.

\m Now we prove that $\TC_n(S^k)=n$ for $k$ odd. Consider a unit tangent vector field $V$ on $S^k$, $V=\{V_x \bigm| x\in S^k\}$. Given $x,y\in S^k$ such that $y$ is the antipode of $x$, denote by $[x,y]$ the path $[0,1]$ determined by the geodesic semicircle joining $x$ to $y$ and such that the $V_x$ is the direction of the semicircle at $x$.

\sm Furthermore,  if $x$ and $y$ are not antipodes, denote by $[x,y]$ the path $[0,1]$ determined by the shortest geodesic from $x$ to $y$.

\sm Define an injective (non-continuous) function
\begin{equation*}
\begin{aligned}
&\gf: (S^k)^n \longrightarrow T_n(S^k), \\
&\gf(x_1, \ldots, x_n)=\{[x_1,x_1], \ldots, [x_1, x_n]\}.
\end{aligned}
\end{equation*}

\m For each $j=0, \ldots, n-1$ consider the submanfold (with boundary)
$U_j$ in $(S^k)^n$ such that each $n$-tuple $(x_1, \ldots, x_n)$ in $U_j$ has exactly $j$ antipodes to $x_1$. Then $\gf|_{U_j}: U_j\to T_n(S^k)$ is a continuous section of $e_n$, and $\bigcup_{i=0}^{n-1}U_i=(S^k)^n$. Furthermore, each $U_i, i=0, \ldots, n-1$ is an ENR, and so $\TC_n(S^k) \le n$ by \propref{p:enr}. Thus, $\TC_n(S^k)=n$ by \propref{p:est}.

\section{Sequences $\{\TC_n(X)\}$}

Of course, it is useful and interesting to compute invariants $\TC_n(X)$ for different spaces.

\sm However, there is a general problem: to describe all possible (non-decreasing) sequences that can be realized as $\{\TC_n(X)\}_{n=1}^\infty$ with some fixed $X$.

\sm As a first step, note that the inequality $\TC(X)\ge 1+\cat X$
(\cite[Proposition 4.19]{F3})
together with \eqref{eq:cat} imply that
\begin{equation}\label{eq:growth}
\TC_n(X) \le n\TC_2(X)-n+1.
\end{equation}
So, any sequence $\{\TC_n(X)\}$ has linear growth.

\m Given $a\in \N$, we can also consider two functions
$$
f_a(n)=\max_X\{\TC_n(X)\bigm | \TC(X)=a\}
$$
and
$$
g_a(n)=\min_X\{\TC_n(X)\bigm | \TC(X)=a\}.
$$
 So,
\begin{equation}
n\le g_a(n)\le f_a(n)\le na-n+1
\end{equation}
We can ask about the evaluation of the functions $f_a$ and $g_a$. (This question was inspired by a discussion with M. Grant.)

\sm Now we show that $g_3(n)<f_3(n)$ for $n>2$.

\sm We have $\TC(S^2)=3=\TC(T^2)$ (here $T^2$ is the 2-torus, the last equality can be found in~\cite[Theorem 13]{F1}).

\begin{prop}\label{p:torus} $\TC_n(T^2)\ge 2n-1$.
\end{prop}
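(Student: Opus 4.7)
The plan is to apply \propref{p:diag} to a collection of $2(n-1)$ one-dimensional zero-divisor classes in $H^*((T^2)^n;\Z)$.

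Recall that $H^*(T^2;\Z) = \Lambda(a,b)$ with $|a|=|b|=1$, so by the K\"unneth theorem
\[
H^*((T^2)^n;\Z) \cong \Lambda(a_1,b_1,\ldots,a_n,b_n),
\]
where $a_i := p_i^*a$ and $b_i := p_i^*b$ for $p_i\colon (T^2)^n \to T^2$ the projection onto the $i$-th factor. The diagonal satisfies $d_n^*a_i = a$ and $d_n^*b_i = b$ for every $i$, so the $2(n-1)$ classes
\[
u_i := a_i - a_n, \qquad v_i := b_i - b_n \qquad (i = 1,\ldots,n-1)
\]
all lie in $\Ker d_n^*$. Applying \propref{p:diag} with $m = 2(n-1)$ will then give $\TC_n(T^2) \ge 2n-1$, provided their cup product is nonzero.

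The substantive step is to show
\[
P := u_1 \cupr \cdots \cupr u_{n-1} \cupr v_1 \cupr \cdots \cupr v_{n-1} \ne 0
\]
in $H^{2(n-1)}((T^2)^n;\Z)$. Since all $a_j, b_j$ have odd degree, they pairwise anticommute and square to zero; in particular $a_n^2 = b_n^2 = 0$. Expanding $P$ by distributivity, each $u_i$ contributes $a_i$ or $-a_n$ and each $v_j$ contributes $b_j$ or $-b_n$, and any expansion term that picks $-a_n$ twice or $-b_n$ twice must vanish. Now read off the coefficient of the basis monomial $M := a_1 a_2 \cdots a_{n-1} b_1 b_2 \cdots b_{n-1}$: since $M$ involves neither $a_n$ nor $b_n$, the only way to produce it in the expansion is to select $a_i$ from every $u_i$ and $b_j$ from every $v_j$, which contributes $+M$. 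Thus $P$ has coefficient $+1$ on the nonzero basis element $M$, and so $P \ne 0$.

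The only genuinely delicate point is this nonvanishing, and it reduces to a short bookkeeping argument using $a_n^2 = b_n^2 = 0$; no real obstacle is anticipated. Note that the bound $2n-1$ is consistent with (and, for $n=2$, tight against) Farber's value $\TC(T^2) = 3$, and fits under the upper bound $\TC_n(T^2) \le n\cat(T^2)+1 = 2n+1$ coming from \eqref{eq:cat}.
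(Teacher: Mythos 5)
Your proposal is correct and is essentially the paper's argument: the same $2(n-1)$ degree-one zero-divisors (differences of pulled-back generators, merely subtracting the $n$-th factor instead of the first) fed into \propref{p:diag}. The only cosmetic difference is how nonvanishing of the product is checked — you read off the coefficient of the basis monomial $a_1\cdots a_{n-1}b_1\cdots b_{n-1}$, while the paper restricts along the inclusion of the last $n-1$ torus factors, which amounts to the same computation.
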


\begin{proof}
Let $x,y$ be the canonical generators of $H^1(T^2)$.
Put $x_i=p_i^*x$ where $p_i: (T^2)^n \to T^2$ is the projection on $i$th factor.
Similarly, put $y_i=p^*y$. Then $d_n^*(x_2-x_i)=0=d_n^*(y_2-y_i)$ for $i=2, \ldots, n$.
On the other hand, the product
$$
(x_2-x_1)\cupr \cdots \cupr (x_n-x_1)\cupr (y_2-y_1)\cupr \cdots \cupr (y_n-y_1)
$$
is non-zero. Indeed, it maps to $x_2\cupr \cdots \cupr x_n \cupr y_2\cupr \cdots \cupr y_n \ne 0$ under the inclusion $(T^2)^{(n-1)} \to (T^2)^n$ on the last $n-1$ copies of $T^2$.

\sm
Now the claim follows from \propref{p:diag}.
\end{proof}

Thus, for $n>2$ we have
$$
g_3(n)\le \TC_n(S^2)=n+1<2n-1 \le \TC_n(T^2)\le f_3(n).
$$

\m So, we see that the sequence $\{\TC_n(X)\}$ contains more information on (the complexity of) a space $X$ than just the number $\TC(X)$.

\section{Symmetric topological complexity}

Farber~\cite[Section 31]{F2} considered a symmetric version $\TC^S$(X) of the topological complexity. More detailed information about this invariant can be found in the papers Farber--Grant~\cite{FG} and Gonz\'alez--Landweber~\cite{GL}. We define its higher analogs $\TC_n^S(X)$ as follows:  Let $\Delta=\Delta^n_X \subset X^n$ be the discriminant,
$$
\Delta=\{(x_1, \ldots, x_n)\bigm| x_i=x_j \text{ for some pair $(i,j)$ with $i\ne j$ }\}
$$
The space $X^n\setminus \Delta$ consists of ordered configurations of $n$ distinct points in $X$ and is frequently denoted by $F(X,n)$. Let $v_n: Y \to F(X,n)$ be the restriction of the fibration $e_n$. Then the symmetric group $\Sigma_n$ acts on $Y$ by permuting paths and on $F(X,n)$ by permuting coordinates. These actions are free and the map $v_n$ is equivariant. So, the map $v_n$ yields a map (fibration) $ev_n$ of the corresponding orbit spaces, and we define $\TC_n^S(X)$ as $\TC_n^S(X)=1+\Sv(ev_n)$. Note that, for the symmetric complexity we have $\TC^S(X)=\TC_2^S(X)$.

\m It is worth mentioning that in case $X=\R^2$ the space $F(X,n)/\Sigma_n$ is the classifying space for the $n$-braid group $\beta_n$. So, the symmetric topological complexity $\TC_n^S$ turns out to be related to the topological complexity of algorithms considered by Smale~\cite{Sm} and Vassiliev~\cite{V}.

\end{document}